\documentclass[11pt]{article}
\usepackage{amsmath}
\usepackage{amssymb}
\usepackage{amsthm}
\usepackage{mathrsfs}
\usepackage{anysize}
\usepackage{natbib}
\usepackage{xspace}
\usepackage{setspace}




\usepackage{booktabs}
\usepackage{pdflscape}
\usepackage{dsfont}
\usepackage{enumerate}
\usepackage{graphicx}
\usepackage{floatrow}
\usepackage{bm}
\usepackage{multicol}
\usepackage{multirow}
\usepackage[dvips]{epsfig}

\usepackage{float}
\usepackage{color}
\usepackage[latin1]{inputenc}
\usepackage[english]{babel}
\usepackage{xcolor}

\usepackage{bbm}
\usepackage{mathtools}

\numberwithin{equation}{section}

\theoremstyle{plain}
\newtheorem{thm}{Theorem}
\newtheorem{cor}{Corollary}

\newtheorem{prop}{Proposition}
\theoremstyle{definition}

\theoremstyle{definition}

\theoremstyle{remark}

\marginsize{25mm}{25mm}{25mm}{25mm}

\newcommand{\I}{\mathds{1}}




\renewcommand\S{\mathcal S}

\renewcommand\L{\mathcal L}







\linespread{1.2}

\begin{document}

\title{On the convolution equivalence of tempered stable distributions on the real line\footnote{The author gratefully acknowledges the comments of Claudia Kl\"uppelberg, Pasquale Cirillo, and Iosif Pinelis, and the contribution to Proposition \ref{lemma} of Iosif Pinelis. They are not responsible of any errors.}}
\author{Lorenzo Torricelli\footnote{University of Bologna, Department of Statistical Sciences ``P. Fortunati''. Email: lorenzo.torricelli2@unibo.it}  }
\date{\today}

\maketitle

\begin{abstract}
We show the convolution equivalence property of univariate tempered stable distributions in the sense of \cite{ros:07}. This makes rigorous various classic heuristic arguments on the asymptotic similarity between the probability and L\'evy densities of such distributions. Some specific examples from the literature are discussed.
\end{abstract}

\noindent {\bf{Keywords}}: Tempered stable distributions, convolution equivalence, subexponentiality, long tails, heavy tails.

\noindent {\bf{AMS}}: 60E07


\section{Tempered stable distributions and tail properties}

We discuss here some tail properties of a certain  infinitely divisible (i.d.) distribution class on the real line called tempered stable distributions.
 For a distribution $\mu$ on $\mathbb R$  denote by $F$  (or $F_\mu$ if necessary)  its cumulative distribution function (c.d.f.) $F(x)=\mu((-\infty,x])$ and  by $\overline F(x)=1-F(x)=\mu((x, \infty))$ (resp. $\overline {F_\mu}$) its survival function (s.f.). Let further
$\hat \mu(z)=\int_{(-\infty,\infty)} e^{zx}\mu(dx)$  for all $z \in \mathbb R$ such that the integral converges, be the moment generating function (m.g.f.) of $\mu$, i.e. $\hat \mu(z) = \mathscr{L}(-z;\mu)$, where $\mathscr{L}(\cdot;\mu)$ indicates the Laplace transform of the measure $\mu$.

Saying that $\mu$ is \emph{infinitely divisible} (i.d.) amounts to the property that its characteristic function (c.f.) has the so-called L\'evy-Khintchine representation
\begin{equation}\label{eq:LK}
\int_{-\infty}^\infty e^{izx }\mu(dx)=  \exp\left(i  z  b  -  z^2 \sigma /2+\int_{\mathbb R} \left( e^{i z x} -1 -   i    z x\I_{\{| x |<1\}}\right)\nu( d x)\right), \qquad z \in \mathbb R
\end{equation}
 with $b \in \mathbb R$, $\sigma \geq 0$ and  $\nu$ is a measure on $\mathbb R$ such that $\nu(\{0 \})=0$, $\int_{\mathbb R} (x ^2 \wedge 1)\nu( dx) <\infty$, called the L\'evy measure.  With abuse of notation, if $\nu$ is absolutely continuous  we indicate a L\'evy density using the same expression as the corresponding measure, and always understand $\nu(\{0 \})=0$.

Classically, ``tempering'' means  tilting the c.d.f. or the probability density function (p.d.f.) of a distribution  by multiplication with an exponential of negative argument.  In their pioneering works, \citet{man+sta:95} and \citet{kop:95} applied tempering to L\'evy measures as well. In particular, the approach followed by the latter of applying an exponential smoothing factor to the stable L\'evy density proved to be particularly consequential, since  it solves the the infinite variance issue of stable random walk models in physics, while retaining analytical expressions for the c.f.s. It was later established in \citet{ros:07} that such approach  could be extended from exponential to general completely monotone tempering functions, and that multivariate versions of the procedure can be envisaged as well.

 We recall that a \emph{completely monotone} (c.m.) function $f: (0,\infty) \rightarrow \mathbb R$ is an infinitely-differentiable function whose derivatives satisfy
\begin{equation}\label{eq:cm}
(-1)^{n}f^{(n)}(x) \geq 0, \qquad x \geq 0, n \in \mathbb N.
\end{equation}
The limit $\lim_{ x \rightarrow 0} f(x)$ always exists and if it is finite \eqref{eq:cm} holds for the extension of $f$ on $[0,\infty)$ as well.    An important theorem by S. N. Bernstein states that $f$ is c.m. if and only if
\begin{equation}\label{eq:bernst}
f(x)=\mathscr{L}(x;\eta)
\end{equation}
for some positively-supported positive Borel measure $\eta$.

 Let $q_+,q_-:(0,\infty) \rightarrow \mathbb R$ be c.m. functions with corresponding Bernstein representing measures $Q_\pm$ in \eqref{eq:bernst} such that $\lim_{x \rightarrow \infty}q_\pm(x)=0$ and let

\begin{equation}\label{eq:rosrep}
\nu(x)=\delta_+\frac{q_+(x)}{x^{1+\alpha}}\I_{\left\{x>0\right\}}+\delta_-\frac{q_-(|x|)}{|x|^{1+\alpha}}\I_{\left\{x<0\right\}}, \qquad \delta_+, \delta_- \geq 0, \, \alpha \in (0,2).
\end{equation}
A \emph{tempered $\alpha$-stable}  (TS$_\alpha$) law $\mu$ on $\mathbb R$ in the sense of \citet{ros:07}  is an i.d. distribution with  L\'evy triplet $(b, 0, \nu)$ with $\nu$ given by \eqref{eq:rosrep}.  
If additionally $\lim_{x \rightarrow 0}q_\pm(x)=1$ then we extend $q_\pm$ on $[0, \infty)$ and say that $\mu$ is \emph{proper}. In other words a proper TS$_\alpha$ law is one for which $q_\pm$ are  probability s.f.s  of some probability distribution on $\mathbb R_+$, and the corresponding Bernstein representing measures $Q_\pm$ in \eqref{eq:bernst} are positive p.d.f.s. 

\smallskip

In the applied literature it is often stated or implied (e.g. \citet{kop:95}) that asymptotically the tails of i.d. p.d.f.s ``behave as'' those of their L\'evy densities.   
A first indication of some kind of similarity between  $\overline \mu(x)$ and those of $\overline \nu(x)$ for large $x$ is provided by the fact that  absolute/exponential moments of $\mu$ are finite if and only if those of $\nu\I_{\{x>1\}}/\bar \nu(1)$ are (e.g. \citet{sat:99}, Corollary 25.8). 
For instance  if $q_\pm(x)=e^{-\lambda x}$ , it follows from \eqref{eq:rosrep} that exponential moments  of order $\theta< \lambda$ exist, and thus all moments also do. In applications, this is precisely the idea motivating L\'evy tempering as a solution for resolving issues related to non-existence of moments.
As observed by \citet{wat+yam:10},  so long as the ratio $\overline F(x)/\overline \nu(x)$ is bounded from above and away from zero the tails of probability densities of an i.d. distribution correspond to those of the L\'evy densities in a weak sense i.e. setting $0<c=\liminf_{x \rightarrow \infty}F(x)/\overline \nu(x) $, $0<C=\limsup_{x \rightarrow \infty} F(x)/\overline \nu(x)$, then  for all $\delta \in (0,1)$ there exists $x_\delta>0$ such that
 \begin{equation}
c(1-\delta)  \overline \nu(x) < \overline F(x) < C(1+\delta) \overline \nu(x)
 \end{equation}
for all $x>x_\delta$. However, in general $c \neq C$, and the stronger statement 
 $\overline F(x) \sim D \, \overline \nu(x)$, $D=c=C$ does not need to hold. 
A known sufficient condition for this asymptotic equivalence to hold is the so called convolution equivalence property of the given probability distribution.

For $\gamma \geq 0$, a distribution $\mu$ on $\mathbb R$ is said to belong to the class $\mathcal L(\gamma)$   if for all for all $x,y \in \mathbb R$ we have $\overline F(x)>0$ and 
 \begin{equation}\label{eq:convlclass}\frac{\overline {F}(x+y)}{ \overline F(x)}=e^{- y \gamma}, \qquad x \rightarrow + \infty .\end{equation} It is instead said to be  \emph{convolution equivalent}, or of class $\mathcal S(\gamma)$, if for some $\gamma$ we have $\mu \in \mathcal L(\gamma)$ and

\begin{equation}\label{eq:yammain}
\lim_{x\rightarrow \infty}\frac{\overline{F*F}(x)}{\overline{F}(x)}= 2 \hat \mu (  \gamma)<\infty
\end{equation}
(i.e. \citet{wat:08}, Equations (1.1) and (1.2), \citet{wat+yam:10}, Definition 1.1).
In general it holds $\mathcal S(\gamma) \subsetneq  \mathcal L(\gamma)$ for all $\gamma\geq 0$. For example an Exp$(\lambda)$ distribution   belongs to  $\L(\lambda)$ but not to $\S(\lambda)$  since $\hat \mu(\lambda)=\infty$ and the limit \eqref{eq:yammain} is infinite. For $\gamma=0$,  examples in $\L(0) \setminus \S(0)$ are given in  \citet{pitman1980subexponential} and \citet{embrechts1980closure}. The class $\L(0)$ is called that of the \emph{long-tailed} distributions and $\mathcal S(0)$ that of the \emph{subexponential} distributions. It is easy to show that a long-tailed distribution is a particular instance of a heavy-tailed distribution -- the latter meaning that no exponential moment exists -- but these two classes do not coincide. Qualitatively, subexponentiality  amounts to the property that  the sum of two independent random variables distributes asymptotically as their maximum.

For full details and properties the classes $\L(\gamma)$ and $\S(\gamma)$ subexponentiality, long-tailedness, related concepts and applications we refer the reader to  \citet{embrechts1980closure}, \citet{embrechts1982convolution}, \citet{embrechts2013modelling}, \citet{cline1986convolution}, \citet{pakes2004convolution}, \citet{wat:08}, \citet{wat+yam:10} and references therein.

\smallskip

We show in this letter that TS$_\alpha$ laws are convolution equivalent. The consequence for applications is that their probability and  L\'evy densities are asymptotically equivalent, and the proportionality constant can be explicitly determined, so long as the c. f. of the model is known.

\section{Convolution equivalence of tempered stable distributions}\label{sec:dens}


The following Proposition is of independent interest and provides some noticeable properties of c.m s.f.s.


\begin{prop}\label{lemma}
Let $\mu$ be a probability distribution on $\mathbb R_+$ such that $\overline {F_\mu}$ is c.m. with  Bernstein representing probability measure $\eta$. Then:
\begin{enumerate}
 \item[(i)] $ \mu \in \L(\gamma)$ with $\gamma = \inf_{[0,\infty)} \mbox{{\upshape supp}}(\eta)$;
\item[(ii)]  the convergence in   \eqref{eq:convlclass} is monotone increasing for all $y \geq  0$.
\end{enumerate}  \end{prop}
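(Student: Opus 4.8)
The plan is to work entirely from the Bernstein representation. By \eqref{eq:bernst} and the convention $\mathscr L(x;\eta)=\int_0^\infty e^{-sx}\eta(ds)$, the hypothesis reads $\overline{F_\mu}(x)=\int_0^\infty e^{-sx}\eta(ds)$ with $\eta$ a probability measure on $[0,\infty)$, so that each $\overline{F_\mu}(x)$ is a strictly positive normalising constant. I would then introduce the tilted probability measures
\begin{equation*}
\eta_x(ds)=\frac{e^{-sx}\,\eta(ds)}{\int_0^\infty e^{-ux}\eta(du)},
\end{equation*}
and record the identity
\begin{equation*}
\frac{\overline{F_\mu}(x+y)}{\overline{F_\mu}(x)}=\int_0^\infty e^{-sy}\,\eta_x(ds),\qquad y\in\mathbb R,
\end{equation*}
which turns both assertions into statements about the family $\{\eta_x\}_{x>0}$.

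For (i) I would show that $\eta_x$ concentrates at $\gamma=\inf\operatorname{supp}(\eta)$ as $x\to\infty$. Since $\operatorname{supp}(\eta)\subseteq[\gamma,\infty)$, for any $\varepsilon>0$ the numerator obeys $\int_{\gamma+\varepsilon}^\infty e^{-sx}\eta(ds)\le e^{-(\gamma+\varepsilon)x}$, while $\gamma\in\operatorname{supp}(\eta)$ forces $m:=\eta([\gamma,\gamma+\varepsilon/2))>0$ and hence $\int_0^\infty e^{-sx}\eta(ds)\ge m\,e^{-(\gamma+\varepsilon/2)x}$; dividing gives $\eta_x([\gamma+\varepsilon,\infty))\le m^{-1}e^{-\varepsilon x/2}\to0$, i.e. $\eta_x\Rightarrow\delta_\gamma$. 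For $y\ge0$ the map $s\mapsto e^{-sy}$ is bounded and continuous on $[\gamma,\infty)$, so weak convergence (or an elementary squeeze splitting the integral over $[\gamma,\gamma+\varepsilon)$ and its complement) yields $\int_0^\infty e^{-sy}\eta_x(ds)\to e^{-\gamma y}$, which is \eqref{eq:convlclass}. The case $y<0$ has an unbounded integrand and is the one point requiring care; I would sidestep it via the reciprocal identity: writing $y=-z$ with $z>0$,
\begin{equation*}
\frac{\overline{F_\mu}(x-z)}{\overline{F_\mu}(x)}=\left(\frac{\overline{F_\mu}((x-z)+z)}{\overline{F_\mu}(x-z)}\right)^{-1}\xrightarrow[x\to\infty]{}\left(e^{-\gamma z}\right)^{-1}=e^{-\gamma y},
\end{equation*}
since $x-z\to\infty$ and the inner ratio is governed by the already-settled case $z>0$.

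For (ii) the engine is the log-convexity of completely monotone functions. By H\"older's inequality applied to $\overline{F_\mu}(x)=\int_0^\infty e^{-sx}\eta(ds)$, the function $\log\overline{F_\mu}$ is convex on $(0,\infty)$; equivalently, writing $g=\overline{F_\mu}$ and differentiating under the integral, $(\log g)''(x)=\mathrm{Var}_{\eta_x}(s)\ge0$, so $(\log g)'$ is nondecreasing. Fixing $y\ge0$ and setting $\phi(x)=\overline{F_\mu}(x+y)/\overline{F_\mu}(x)$, I compute $(\log\phi)'(x)=(\log g)'(x+y)-(\log g)'(x)\ge0$, whence $\phi$ is nondecreasing in $x$. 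Combined with the limit from (i), this shows $\phi(x)\uparrow e^{-\gamma y}$, i.e. the convergence in \eqref{eq:convlclass} is monotone increasing for every $y\ge0$.

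The only genuinely delicate point is the identification of the limiting constant in (i): everything reduces to the Laplace-type concentration $\eta_x\Rightarrow\delta_\gamma$, and the bound above makes this quantitative enough to pass to the limit. The reciprocal trick disposes of the unbounded test function for $y<0$, after which the monotonicity in (ii) follows cleanly from the classical fact that Bernstein functions are log-convex, with no further analysis needed.
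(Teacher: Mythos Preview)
Your proof is correct and follows essentially the same route as the paper: both arguments split the Bernstein integral into a piece near $\gamma$ and a tail piece with the same exponential estimates (your $m=\eta([\gamma,\gamma+\varepsilon/2))$ is the paper's $c(h)$, and your bound $\eta_x([\gamma+\varepsilon,\infty))\le m^{-1}e^{-\varepsilon x/2}$ is exactly the paper's $I_\infty/I_0$ estimate), while (ii) in both cases reduces to log-convexity of a completely monotone function --- you supply this via $(\log g)''=\mathrm{Var}_{\eta_x}(s)\ge 0$, whereas the paper cites a reference. One terminological slip in your closing sentence: you write ``Bernstein functions are log-convex'', but Bernstein functions are those with completely monotone \emph{derivative}; you mean completely monotone functions.
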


\begin{proof}
We drop the subscript from the s.f..
W.l.o.g. we can assume $y >0$. Let $h>0$ be fixed and set $I_0(x)=\int_{[\gamma,\gamma+h)}e^{-s x} \eta(ds)$, $I_\infty(x)=\int_{[\gamma+h,\infty)}e^{-s x} \eta(ds)$. We have \begin{equation}\label{eq:alphaineq}
\frac{ F(x)}{F(x+y)} <\frac{I_0(x)}{I_{0}(x+y)}\left(1 +\frac{I_\infty(x)}{I_{0}(x)}\right).
\end{equation}
Now, letting $c(h):=\eta( [\gamma, \gamma+h/2))$
\begin{equation}
I_0(x) \geq \int_{ [\gamma, \gamma+h/2)} e^{-s x} \eta(ds) > c(h) e^{-x(\gamma+ h/2)}
\end{equation}
and $c(h)>0$ because $\gamma \in$ supp$(\eta)$. 
Furthermore
\begin{equation}
I_\infty(x) < e^{-x(\gamma+ h)}\eta( [\gamma+h,\infty)) <e^{-x (\gamma+ h)}
\end{equation}
since $\eta$ is a probability measure, so that 
\begin{equation}\label{eq:eps}
\frac{I_\infty(x)}{I_{0}(x)} < \frac{e^{-xh/2}}{c(h)}.
\end{equation} 
Moreover
\begin{equation}\label{eq:Iest}
I_0(x+y) > e^{-y (\gamma+ h)} I_0(x)
\end{equation}
and using \eqref{eq:eps} and \eqref{eq:Iest} in \eqref{eq:alphaineq} we obtain 
\begin{equation}
\frac{ F(x)}{F(x+y)} <  e^{y (\gamma+ h)}\left (1 + \frac{e^{-x h/2}}{c(h)} \right)
\end{equation}
for all $h > 0$. Therefore $\limsup_{x\rightarrow \infty} F(x)/F(x+y) \leq  e^{y (\gamma+ h)} $, and since $h$ is arbitrary  $\limsup_{x\rightarrow \infty} F(x)/F(x+y) \leq  e^{y \gamma} $. But on the other hand also $F(x)/F(x+y) \geq e^{y \gamma}$ for all $x, y \geq 0$ by the Bernstein representation, so we conclude $\lim_{x\rightarrow \infty} F(x)/F(x+y) =  e^{y \gamma} $ for all $y > 0$ which yields the claim.

\smallskip

To prove $(ii)$ we need to show that
$\frac{F(x+y)}{F(x)}$  
 is increasing  in $x$ for all $y \geq 0$. Applying the quotient rule we have  
\begin{equation}
\frac{d}{dx} \frac{ F (x+y)}{F(x)} =\frac{F(x)F'(x+y)-F'( x)F(x+y)}{F( x)^2}.  
\end{equation}
Such an expression is positive if and only if
\begin{equation}\label{eq:logconc}
\frac{F' (x+y)}{F (x+y)} \geq  \frac{F' (x)}{F(x)}, \quad y>0 
\end{equation}
which is in turn equivalent to $\frac{d}{dx}\log F(x)$ being increasing.  But being $F$  c.m. it is also log-convex  (\citet{ste+vh:03}, A.3, Proposition 3.8) and $(ii)$ follows. 
\smallskip


\end{proof}

From Proposition \ref{lemma}, $(i)$, it follows in particular that $\mu \in \L(0)$ if and only if $0 \in \mbox{{\upshape supp}}(\eta)$. Notice that $(ii)$ does not hold when $F$ is not log-convex: for example, if $F_\mu$ is log-concave \eqref{eq:logconc} shows that such convergence is decreasing. An example is  the Weibull distribution i.e. $F_\mu(x)=e^{-(x/\lambda)^k}$, with $k>1, \lambda>0$.

\smallskip

We can now prove convolution equivalence of TS$_\alpha$ distributions, and explicitly identify the tail parameter. The following Theorem  generalizes \citet{kuc+tap:08}, Lemmas 7.3 to 7.5, to arbitrary stability parameters and tempering functions.

\begin{thm}\label{prop:tailsinfty}
Let $\mu$ be a  proper {\upshape TS}$_\alpha$ distribution. Then $\mu \in \mathcal S(\gamma)$ where $\gamma = \inf_{[0,\infty)} \mbox{{\upshape supp}}(Q_+)$.
\end{thm}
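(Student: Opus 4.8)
The plan is to transfer the problem from the law $\mu$ to its L\'evy measure, where the complete monotonicity built into \eqref{eq:rosrep} can be combined with Proposition \ref{lemma}. The pivot is the characterisation of convolution equivalence for infinitely divisible laws (\citet{pakes2004convolution}, \citet{wat:08}): for an i.d.\ law with L\'evy measure $\nu$ one has $\mu\in\S(\gamma)$ if and only if the normalised right tail $\nu_{(1)}:=\nu(\,\cdot\,\cap(1,\infty))/\nu((1,\infty))$ lies in $\S(\gamma)$, in which case the asymptotic equivalence $\overline{F_\mu}(x)\sim\hat\mu(\gamma)\,\overline\nu(x)$ promised in the introduction follows automatically, with the constant $\hat\mu(\gamma)$ identified. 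It therefore suffices to show that $\nu_{(1)}$, a law on $(1,\infty)$ with density proportional to $q_+(x)\,x^{-1-\alpha}$, is convolution equivalent with the stated $\gamma$.

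First I would note that the relevant tail is completely monotone. By hypothesis $q_+$ is c.m., $x\mapsto x^{-1-\alpha}$ is c.m., hence so is the product $q_+(x)x^{-1-\alpha}$; and the right tail $\overline\nu(x)=\delta_+\int_x^\infty q_+(y)y^{-1-\alpha}\,dy$ of a c.m.\ integrand is again c.m., since in \eqref{eq:bernst} integration over $(x,\infty)$ merely divides the representing measure by $s$. A short computation identifies this measure as $\eta(ds)=s^{-1}(Q_+*G_\alpha)(ds)$, where $G_\alpha(dt)=t^{\alpha}\,dt/\Gamma(1+\alpha)$ is the Bernstein measure of $x^{-1-\alpha}$ and $*$ is convolution; as $\mathrm{supp}(G_\alpha)=[0,\infty)$ we obtain $\inf\mathrm{supp}(\eta)=\inf\mathrm{supp}(Q_+)=\gamma$. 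The argument of Proposition \ref{lemma}(i) then applies to $\overline\nu$, after the routine modification that $\eta$ need no longer be a probability measure, and gives $\overline\nu\in\L(\gamma)$, hence $\nu_{(1)}\in\L(\gamma)$ since the two share the same tail.

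It remains to upgrade $\L(\gamma)$ to $\S(\gamma)$, and this is where the exponent $\alpha$ earns its keep. The failure of $\mathrm{Exp}(\lambda)$ to be convolution equivalent is caused by $\hat\mu(\lambda)=\infty$; here $\alpha>0$ rules this out. Writing $e^{\gamma x}q_+(x)=\int_{[\gamma,\infty)}e^{-(s-\gamma)x}Q_+(ds)\le Q_+([\gamma,\infty))=1$, the exponentially tilted density obeys $e^{\gamma x}\delta_+q_+(x)x^{-1-\alpha}\le\delta_+x^{-1-\alpha}$, which is integrable at infinity precisely because $\alpha>0$; thus $\hat{\nu}_{(1)}(\gamma)<\infty$ and, a fortiori, $\hat\mu(\gamma)<\infty$ (the negative part of $\nu$ contributing a bounded amount since $e^{\gamma x}\le1$ for $x<0$). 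I would then invoke the standard tilting equivalence reducing $\S(\gamma)$ to subexponentiality: $\nu_{(1)}\in\S(\gamma)$ iff the law with density proportional to $e^{\gamma x}q_+(x)x^{-1-\alpha}$ lies in $\S(0)$. If $Q_+$ is regularly varying at its left endpoint, a Karamata Tauberian argument yields $\overline\nu(x)\sim C\,e^{-\gamma x}x^{-\rho}$ with $\rho\ge1+\alpha>1$, so the tilted law has regularly varying density of index $-\rho<-1$ and is subexponential; this already contains the exponential case $q_+(x)=e^{-\lambda x}$ and recovers \citet{kuc+tap:08}.

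The main obstacle is this final step with no regularity assumed on $Q_+$ near $\gamma$: then $\overline\nu$ has no clean power asymptotic and the tilted density need not be regularly varying. My remedy would exploit the structure that is available for free: the tilted density is completely monotone, hence decreasing and convex, it is long-tailed by the $\L(\gamma)$ property just established, and it is dominated by $\delta_+x^{-1-\alpha}$. Monotonicity and convexity from complete monotonicity, long-tailedness, and domination by a regularly varying function should together suffice to verify the convolution condition \eqref{eq:yammain} with limit $2\hat{\nu}_{(1)}(\gamma)$. Establishing subexponentiality of long-tailed completely monotone densities at this level of generality is the delicate heart of the matter; once $\nu_{(1)}\in\S(\gamma)$ is secured, the infinitely divisible characterisation closes the proof and fixes $\gamma=\inf\mathrm{supp}(Q_+)$ as claimed.
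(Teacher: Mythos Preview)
Your reduction to $\nu_1\in\S(\gamma)$ via the Pakes--Watanabe characterisation matches the paper, and your $\L(\gamma)$ argument is essentially right but needlessly indirect: the paper simply applies Proposition~\ref{lemma}(i) to $q_+$ itself---which \emph{is} a probability s.f.\ since $\mu$ is proper---and multiplies by $(x/(x+y))^{1+\alpha}\to1$. Your route through the Bernstein measure of $\overline\nu$ is not a ``routine modification'' of Proposition~\ref{lemma}: that representing measure has infinite total mass (indeed $\overline\nu(0^+)=\infty$, and one checks its mass on $[\gamma+h,\infty)$ is also infinite for every $h>0$), so the step $I_\infty(x)<e^{-x(\gamma+h)}$ in the proof of Proposition~\ref{lemma}(i) fails as written and needs a genuine repair.

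The substantive gap, however, is the one you yourself flag as ``the delicate heart of the matter'': you never establish that the tilted law is subexponential. Listing long-tailedness, complete monotonicity, and domination by $x^{-1-\alpha}$ is not a proof, and there is no off-the-shelf theorem that assembles them for you. The paper closes this gap by a direct dominated-convergence computation of $\overline{\nu_1*\nu_1}(x)/\overline{\nu_1}(x)$, and the decisive bound comes from Proposition~\ref{lemma}(ii), which you do not invoke at all: log-convexity of the c.m.\ function $q$ makes $z\mapsto q(x-z)/q(x)$ monotone, equivalently $q(x-z)/q(x)$ is \emph{decreasing} in $x$ on $\{x>z\}$, so on the symmetrised range $1\le z\le x/2$ it is maximised at $x=2z$, producing the $x$-free dominating function $(2/z)^{1+\alpha}q(z)^2/q(2z)$. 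A short argument with the shifted c.m.\ function $q^\gamma(z)=e^{\gamma z}q(z)$ shows that $q(z)^2/q(2z)=(q^\gamma(z))^2/q^\gamma(2z)$ is bounded, so the dominator is $O(z^{-1-\alpha})$ and integrable precisely because $\alpha>0$. Dominated convergence together with Proposition~\ref{lemma}(i) then yields the limit $2\hat{\nu_1}(\gamma)$. The missing idea is thus part~(ii) of Proposition~\ref{lemma}: monotone convergence from log-convexity of $q$ supplies exactly the uniform control you were seeking.
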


\begin{proof}

 We drop the $+$ subscripts. If $\nu$ is the  L\'evy measure of $\mu$, let $\overline \nu(x)=\nu([x, \infty))$, $x \in \mathbb R$. For $x \in \mathbb R$ define the probability density
  \begin{equation}\label{eq:normlaw}
\nu_1(x):=\frac{ \nu(x)  }{\overline \nu(1)}\I_{\{x>1\}}=\frac{\delta}{\overline \nu(1)} \frac{q(x)}{ x^{1+\alpha}} \I_{\{x>1\}}.
\end{equation}
 According to \citet{wat:08}, Theorem B (whose original statement is due to \citet{pakes2004convolution}), we know  $\mu \in \mathcal S(\gamma)$ if and only if $\nu_1 \in \mathcal S(\gamma)$, and  we will prove the latter.


 First we have to show $\nu_1 \in \mathcal L(\gamma)$. 
Reasoning on the densities, for all  $y >0$ it does hold
\begin{align}\label{eq:longtailedTGS}
& \lim_{x \rightarrow \infty} \frac{\overline{ \nu_1}(x+y)}{\overline{ \nu_1}(x)}= \lim_{x\rightarrow \infty}\left( \frac{x}{x+y} \right)^{1+\alpha}  \frac{q(x+y)} {q(x)}=e^{-\gamma y}
\end{align}
having used Proposition \ref{lemma}, $(i)$, on the probability distribution  induced by the c.m. function $q$, which is a s.f. being $\mu$ proper.  

Next, interpreting the convolution in \eqref{eq:yammain} on the L\'evy densities we obtain
\begin{align}\label{tailslim}
\lim_{x\rightarrow \infty}\frac{\overline{\nu_1* \nu_1}(x)}{\overline{\nu_1}(x)}&= \lim_{x\rightarrow \infty}  \frac{1}{\nu_1(x)} \displaystyle{ \int_0^x  \nu_1(x-z)\nu_1(z) dz}  \nonumber \\ &=\frac{1}{\overline \nu(1)}\lim_{x\rightarrow \infty} \frac{1}{\nu_1(x)} \displaystyle{ \int_0^{x}\I_{\{z\geq 1\}}\I_{\{ x-z \geq 1 \}}  \nu(x-z)\nu(z) dz}\nonumber \\ &= \frac{ \delta}{\overline \nu(1)} \lim_{x\rightarrow \infty}\int_{1}^{x-1} \left(\frac{x}{(x-z)z}\right)^{1+\alpha} \frac{q(x-z)q(z)}{q(x)} dz .
\end{align}
Using that the integrand in the last line of \eqref{tailslim} is symmetric in $z$ and $x-z$ about $x/2$ leads to
\begin{align}\label{sim}\int_{1}^{x-1} \left(\frac{x}{(x-z)z}\right)^{1+\alpha} \frac{q(x-z)q(z)}{q(x)} dz  = &2 \int_{1}^{x/2} \left(\frac{x}{(x-z)z}\right)^{1+\alpha} \frac{q(x-z)q(z)}{q(x)} dz .
 \end{align}
Now for  $z \in (1,x/2)$ we have $x/(x-z) \leq 2$. Moreover, by Proposition \ref{lemma}, $(ii)$, for all fixed $ z \geq 1$ the ratio $q(x-z)/q(x)$ is decreasing in $x$ for $x >z$  so that $q(x-z)/q(x)\I_{\{1 \leq z \leq x/2\}}$ has a maximum in $x=2 z$. This produces the estimate
\begin{equation}\label{eq:int}
\left( \frac{x} {(x-z)z}\right)^{1+\alpha}\frac{q(x-z)q(z)}{q(x)}\I_{\{1 \leq z \leq x/2 \} } < 
\left(\frac{2} {z}\right)^{1+\alpha} \frac{q(z)^2}{q(2z)}, \qquad z > 0, \, x >0. \end{equation}
To show the integrability of the right hand side of  \eqref{eq:int}, let $q^\gamma(z):=e^{\gamma z}q(z)$. Since  $q(z)=\mathscr {L}(z, Q)$ then $q^\gamma(z)=\mathscr {L}(z, Q^\gamma)$, where $Q^\gamma$ is the full-support shift of $Q$, i.e. $Q^{\gamma}([0,x))=Q([\gamma, x+\gamma))$, $x \geq 0$. Therefore $q^\gamma$ is c.m. and hence decreasing. Thus as $z\rightarrow \infty$ we have $e^{\gamma z}q(z) \rightarrow c$, $c \geq 0 $, and hence 
\begin{equation}
\frac{q(z)^2}{q(2z)}=\frac{ (e^{\gamma z}q(z))^2}{e^{2 \gamma z}q(2z)} \rightarrow c, \qquad z \rightarrow \infty.
\end{equation}
This implicates that the right hand side of  \eqref{eq:int} is $O(z^{-1-\alpha})$ which is integrable for all	 $\alpha \in (0,2)$.
 We can therefore apply the dominated convergence theorem and continue \eqref{tailslim}  by taking the limit inside the integral. Recalling  $q(x-z) \sim e^{\gamma z}q(x)$ again from Proposition \ref{lemma}, $(i)$, we obtain
\begin{align}\label{eq:final}
\lim_{x \rightarrow \infty}&\frac{\overline{\nu_1*\nu_1}(x)}{\overline{\nu_1}(x)}=\frac{2  \delta}{\overline \nu(1)}  \int_1^{\infty}\lim_{x\rightarrow \infty}  \left( \frac{x}{(x-z)z}\right)^{1+\alpha} \frac{q(x-z) q( z) }{q( x)}\I_{\{1 \leq z \leq x/ 2 \} } dz \nonumber \\&= \frac{2  \delta}{\overline \nu(1)} \int_1^\infty \frac{e^{\gamma z} q(z) }{z^{1+\alpha}} dz= 2 \hat {{\nu_1}}( \gamma). 
\end{align}
This completes the proof that $ \nu_1 \in \mathcal S(\gamma)$, and hence that $\mu \in \mathcal S(\gamma)$.

\end{proof}

Convolution equivalence is a sufficient condition to establish that the L\'evy tail function and the s.f. are of the same order at infinity, from which we can deduce the asymptotics of the tail of a TS$_\alpha$  p.d.f.. Furthermore, the explicit proportionality constant is known.


\begin{cor}\label{cor:tailGTGS}
Let $\mu$ be a proper {\upshape TS}$_\alpha$ distribution 
and let $\gamma_\pm=\inf_{[0,\infty)} \mbox{\upshape supp} (Q_\pm)$ where $Q_\pm$ are the Bernstein representing measures of $q_\pm$. 
Then the p.d.f. $p(x)$ of $\mu$ satisfies
\begin{equation}\label{eq:pasympt}
p(x) \sim  \delta_\pm  \hat \mu^\pm_q( \gamma_\pm) \frac{q_\pm(|x|)}{|x|^{1+\alpha}}, \qquad x \rightarrow \pm \infty.
\end{equation}
where $\mu^\pm_q$ are the probability laws with s.f. $q_\pm$.
\end{cor}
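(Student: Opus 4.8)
The plan is to read the density asymptotics off the convolution equivalence already in hand. First I would reduce to a single tail: passing from $X\sim\mu$ to $-X$ turns $\mu$ into a proper {\upshape TS}$_\alpha$ law with the roles of $(q_+,\delta_+,\gamma_+)$ and $(q_-,\delta_-,\gamma_-)$ interchanged and sends the p.d.f. $p(\cdot)$ to $p(-\cdot)$, so it suffices to treat $x\to+\infty$ and to obtain the $x\to-\infty$ statement by symmetry. Dropping the $+$ subscripts as in Theorem \ref{prop:tailsinfty}, that theorem gives $\mu\in\mathcal{S}(\gamma)$, and the Pakes--Watanabe closure theorem (\citet{pakes2004convolution}; \citet{wat:08}, Theorem B; \citet{wat+yam:10}) then upgrades this to the survival-function equivalence $\overline F(x)\sim\hat\mu(\gamma)\,\overline\nu(x)$ as $x\to\infty$, where $\overline\nu(x)=\nu((x,\infty))=\delta\int_x^\infty q(y)\,y^{-1-\alpha}\,dy$ and $\hat\mu(\gamma)<\infty$ by membership in $\mathcal{S}(\gamma)$ (cf. \eqref{eq:yammain}).

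The heart of the argument is to pass from this integrated (tail) equivalence to the density equivalence $p(x)\sim\hat\mu(\gamma)\,\nu(x)$ with $\nu(x)=\delta\,q(x)\,x^{-1-\alpha}$, and I would do this by a monotone-density argument built on the complete monotonicity of $q$. Setting $q^\gamma(x)=e^{\gamma x}q(x)=\mathscr{L}(x;Q^\gamma)$, which is c.m. and decreasing to a finite limit (exactly as in the proof of Theorem \ref{prop:tailsinfty}), the transformed Lévy density $e^{\gamma x}\nu(x)=\delta\,q^\gamma(x)\,x^{-1-\alpha}$ is ultimately monotone, which is the regularity hypothesis needed to run the monotone density theorem on the transformed tail $e^{\gamma x}\overline F(x)\sim\hat\mu(\gamma)\,e^{\gamma x}\overline\nu(x)$. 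Recalling $p(x)=-\frac{d}{dx}\overline F(x)$, this yields $p(x)\sim\hat\mu(\gamma)\bigl(-\frac{d}{dx}\overline\nu(x)\bigr)=\hat\mu(\gamma)\,\nu(x)$. Equivalently, one may invoke a density (absolutely continuous) version of the $\mathcal{S}(\gamma)$ closure theorem after checking that $\mu$ is absolutely continuous --- its Lévy measure is infinite and of order $x^{-1-\alpha}$ near the origin for $\alpha\in(0,2)$ --- and that $\nu_1$ lies in the density subclass of $\mathcal{S}(\gamma)$, which the smoothness and ultimate monotonicity of $\nu$ supplied by Proposition \ref{lemma} guarantee.

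I expect this differentiation step to be the main obstacle: $\mathcal{S}(\gamma)$ membership is an assertion about $\overline F$, not automatically about $p$, and justifying the passage requires precisely the ultimate monotonicity of $e^{\gamma x}p(x)$ (or, in the route through the density closure theorem, the verification of its regularity hypotheses); the complete monotonicity of $q$ furnished by the Bernstein representation is exactly what makes this available, and is the reason Proposition \ref{lemma} is needed beyond Theorem \ref{prop:tailsinfty}. Once $p(x)\sim\hat\mu(\gamma)\,\nu(x)$ is in place, substituting $\nu(x)=\delta\,q(x)\,x^{-1-\alpha}$ gives the displayed asymptotic \eqref{eq:pasympt} with proportionality constant equal to the value of the moment generating function at $\gamma$, and restoring the $\pm$ subscripts together with the reflection reduction produces both the $x\to+\infty$ and $x\to-\infty$ statements. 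As a consistency check one has, for $\gamma_\pm>0$, the equivalence $\overline\nu(x)\sim\nu(x)/\gamma_\pm$ together with $p(x)\sim\gamma_\pm\overline F(x)$, so that the factor $\gamma_\pm$ cancels and the density constant coincides with the one dictated by the tail theorem; both this and the finiteness of the constant follow from the Bernstein representation of $q_\pm$ and the finiteness of the m.g.f. on $\mathcal{S}(\gamma_\pm)$.
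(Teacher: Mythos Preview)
Your outline---Theorem \ref{prop:tailsinfty}, then Watanabe's Theorem B for $\overline F(x)\sim C\,\overline\nu(x)$, then differentiation to densities, then reflection for the negative tail---is exactly the paper's route; the paper's proof is the single phrase ``follows by Theorem \ref{prop:tailsinfty}, together with Theorem B in \citet{wat:08}, and differentiation of the s.f.s,'' plus the same reflection argument you give. You are more careful about the differentiation step than the paper, and rightly flag it as the real obstacle.

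Two remarks on the details. First, in your monotone-density route the hypothesis you actually need is ultimate monotonicity of (a transform of) the \emph{probability} density $p$, not of the L\'evy density $\nu$: the monotone density theorem constrains the derivative of the function whose asymptotics are given, here $\overline F$, not $\overline\nu$. Ultimate monotonicity of $e^{\gamma x}\nu(x)$ is immediate from complete monotonicity of $q$, but that of $e^{\gamma x}p(x)$ is not available a priori, so your second route via a density-level $\mathcal S(\gamma)$ closure result is the one that actually closes the gap; the paper simply does not address this point. Second, the constant you reach is $\hat\mu(\gamma)$, the m.g.f.\ of $\mu$ itself at $\gamma$, which is precisely what Theorem B yields; the corollary as stated writes $\hat\mu_q^{\pm}(\gamma_\pm)$, the m.g.f.\ of the law with survival function $q_\pm$, which for exponential tempering $q(x)=e^{-\theta x}$ would equal $+\infty$. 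Your argument establishes the version with $\hat\mu(\gamma_\pm)$ in place of $\hat\mu_q^{\pm}(\gamma_\pm)$ and is otherwise the paper's own argument, carried out with more care.
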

\begin{proof}
When $x \rightarrow \infty$ the claim follows  by Theorem \ref{prop:tailsinfty}, together with Theorem B in \citet{wat:08}, and differentiation of the s.f.s. The case $x \rightarrow -\infty$ is obtained considering the TS$_\alpha$ distribution  with  s.f. $F_\mu(-x)$, whose L\'evy measure is obtained by interchanging $q_+$ with $q_-$, and $\delta_+$ with $\delta_-$ in \eqref{eq:rosrep}, and applying again Theorem \ref{prop:tailsinfty}.
\end{proof}

The key counterexample is a Gamma $\Gamma(a,b)$ distribution with shape $a$ and rate $b$. Then it is known that its L\'evy measure is of the form \eqref{eq:rosrep} with $q_+(x)=e^{-b x}$, $\delta_+=a$, $\delta_-=0$, but with $\alpha=0$. Hence a Gamma law is not a TS$_\alpha$ law. In fact, Gamma laws are not   convolution equivalent: accordingly   $b^{a} x^{a-1}e^{-bx} \Gamma(a)^{-1}  \not \sim a C x^{-1}e^{-bx}$, $x, C,a, b>0$.
  We discuss some explicit applications of Corollary \ref{cor:tailGTGS} in the last section.


\section{Some examples}

For many  TS$_\alpha$ laws in the literature $\hat \mu_q^\pm$ can be determined analytically, which in light of  \ref{cor:tailGTGS}  makes it possible to determine explicit leading orders of the probability density tails.

The most popular TS$_\alpha$ model is the one with exponential tempering,
i.e.~where $q_\pm(x)=e^{-\theta_\pm x}\I_{\{x>0 \}}$, $\theta_\pm >0$. This idea was originally suggested
by~\citet{kop:95} and found widespread application in finance,
e.g.~\citet{boy+lev:00} and~\citet{car+al:02}. A thorough analytical
investigation -- limited to the case $\alpha \in (0,1)$ -- is found
in~\citet{kuc+tap:13}, which we refer to for more details.
In this case $\mu^\pm_q$ are exponential distributions of parameter
$\theta_\pm$ and $\mu^\pm_q \in \L(\theta_\pm)$. The values of the m.g.f.s $\hat {\mu}^\pm_q$ are
well-known in the literature and take different forms in the case $\alpha=1$,
and $\alpha\neq1$  (\citet{con+tan:03}).
From \eqref{eq:pasympt} we see that $p(x) \sim C_\pm \delta_\pm e^{-\theta_\pm |x|}/|x|^{1+\alpha}$, $x \rightarrow \pm \infty$ where
$C_\pm=\hat \mu^\pm_q(\theta_\pm)$. This analysis recovers~\citet{kuc+tap:13}, Theorem 7.10. In order
to show such result the authors did prove  \eqref{prop:tailsinfty} in the
simple case of exponential functions for $q_\pm$.

Another example is the KR TS$_\alpha$ law  based on  upper  gamma-incomplete
tempering introduced in~\citet{kim+al:08}. The authors set (in an equivalent
formulation)
\begin{equation}
\label{eq:KRtemp}
q_\pm(x)=  (\alpha+p_\pm)  \Gamma(-\alpha-p_\pm) \Gamma^{*}\left(  -\alpha - p_\pm, \frac{x}{r_\pm} \right)\I_{\{x>0 \}}, \quad  p_\pm >-\alpha, \, r_\pm >0 
\end{equation}
where
\begin{equation}
\label{eq:gammastar}\Gamma^{*}(s,x)=\frac{x^{-s}}{\Gamma(s)} \int_x^\infty e^{- t} t^{s-1}dt=\frac{1}{\Gamma(s)}\int_{1}^\infty e^{- x t} t^{s-1}dt 
\end{equation}
 is the modified upper incomplete Gamma function. Of course $\Gamma(s,x) \rightarrow 0$, for
all $s$ and $x \rightarrow \infty$. Moreover, from the series representation of
the lower incomplete gamma function $\gamma(s,x)$ (\citet{abr+ste:64}, 6.5.29)
and $\gamma(s,x)+\Gamma(s,x)=\Gamma(s)$ it follows  $\Gamma(s,x)x^{-s} \rightarrow -1/s $, $s <0$, $x \rightarrow 0$, so  that
the KR law is proper.
Furthermore on the right hand side of \eqref{eq:gammastar}  we can apply
Theorem 4 of~\citet{mil+sam:01}, to establish complete monotonicity. 
Then, from 
$\Gamma(s,x)\sim e^{-x}x^{s-1}$, $x \rightarrow \infty$ (\citet{abr+ste:64}, 6.5.32)
applied to \eqref{eq:KRtemp} we obtain
\begin{equation}
\frac{ q_\pm(x+y)}{q_\pm(x)} \sim \left(\frac{x}{x+y}\right) e^{-y/r_\pm} \rightarrow e^{-y/r_\pm}
\end{equation}
as $x \rightarrow \infty$ and thus $\mu^\pm_q \in \L(1/r_\pm)$.   
Applying Corollary \eqref{cor:tailGTGS} and the asymptotics above then shows
\begin{equation}p(x) \sim \delta_\pm   (\alpha + p_\pm) r_\pm C_\pm \frac{e^{-|x|/r_\pm}}{ |x|^{2+\alpha}}, \qquad x \rightarrow \pm \infty 
\end{equation}
where $C_\pm=\hat \mu^\pm_q(1/r_\pm)$ can be explicitly calculated from~\citet{kim+al:08},
Theorem 3.4.

Finally a recent TS$_\alpha$ suggestion  in~\citet{GTGS}  takes into account
the following tempering functions
\begin{equation}
\label{eq:MLtemper}
q_\pm(x)=e^{-\theta_\pm x}E_{\gamma_\pm}(-\lambda_\pm x^{\gamma_\pm})\I_{\{x>0 \}}, \qquad \lambda_\pm >0, \, \theta_\pm \geq 0, \, \gamma_\pm \in (0,1).
\end{equation}
Here
\begin{equation}
E_\gamma(z)=\sum_{k=0}^\infty\frac{z^{k}}{\Gamma(k+\gamma)}, \qquad z \in \mathbb C, \quad \gamma \geq 0
\end{equation}
is the Mittag-Leffler function, a type of heavy-tailed exponential
which is ubiquitous in fractional calculus. It is known that $E_\gamma(-x)$ for
$\gamma \in [0,1]$ is c.m. (\citet{pol:48}). The functions $q_\pm$ are then c.m.
being product of c.m. functions, one of which attained as composition of a c.m.
function with a third one having c.m. derivative. That $q(0)=1, q(\infty)=0$ is clear.
The c.f. of such laws is given in~\citet{GTGS}, Section 3, Theorems 3 and 4.
Furthermore, it is well-known 
 (e.g.~\citet{hau+al:11}) that
\begin{equation}
\label{eq:MLasymptInf}
E_\gamma(- x^\gamma) \sim \frac{x^{-\gamma}}{\Gamma(1-\gamma) }, \quad x   \rightarrow \infty, \qquad \gamma \in (0,1). 
\end{equation}
From the above it follows that  for all $y>0$
\begin{equation}
\frac{q_\pm(x+y)}{q_\pm(x)}\sim e^{-\theta_\pm y} \left(\frac{x}{x+y}\right)^{\gamma_\pm} \rightarrow e^{-\theta_\pm y}, \qquad  x \rightarrow \infty,
\end{equation}
so that $\mu^\pm_q \in \L(\theta_\pm)$. The tilted Mittag-Leffler laws $\mu^\pm_q$ are discussed
in~\citet{tor+al:21}.  Combining  \eqref{eq:MLasymptInf} with
\eqref{eq:pasympt} we are led to the asymptotics
\begin{equation}
\label{eq:infttail}
p(x) \sim C_\pm \frac{\delta_\pm}{\lambda_\pm \Gamma(1-\gamma_\pm)}  \frac{e^{-\theta_\pm |x|}}{|x|^{1+\gamma_\pm+\alpha}}, \qquad x \rightarrow \pm \infty.
\end{equation}
If $\theta_\pm>0$, then $C_\pm=\hat \mu^\pm_q( \theta_\pm)$ is given by~\citet{GTGS}, Theorem 3,
whereas if $\theta_\pm=0$ we have $C_\pm=1$.

The conclusion is that  mixed exponential/Mittag-Leffler tempered
distributions, termed \emph{generalized tempered geometric} stable (GTGS)
in~\citet{GTGS} -- because the radial part of the L\'evy measure extends that
of a geometric tempered (or ``tempered Linnik'') distribution
of~\citet{bar+al:16a} -- retain  exponential tails if and only if $\theta_\pm>0$.
Instead when $\theta_\pm=0$, which corresponds to pure Mittag-Leffler tempering
in \eqref{eq:MLtemper}, we see that  $\mu$ is  heavy (and long)-tailed,
which is unlike the previous two examples. In particular the expectation and
variance are finite if and only if respectively $\alpha+\gamma_\pm >1$ and $\alpha+\gamma_\pm >2$.
In conclusion GTGS laws  capture power law tails while retaining analytical
expressions, which could be useful for e.g.~analyzing financial data. See the
discussion in~\citet{GTGS} for further details.

\bibliographystyle{apalike}
\bibliography{Bibliography}

\begin{thebibliography}{}

\bibitem[Abramowitz et~al., 1964]{abr+ste:64}
Abramowitz, M., Stegun, I.~A., et~al. (1964).
\newblock {\em Handbook of Mathematical Functions}.
\newblock Dover New York.

\bibitem[Barabesi et~al., 2016]{bar+al:16a}
Barabesi, L., Cerasa, A., Perrotta, D., and Cerioli, A. (2016).
\newblock A new family of tempered distributions.
\newblock {\em Electronic Journal of Statistics}, 10:3871--3893.

\bibitem[Boyarchenko and Levendorski\u{i}, 2000]{boy+lev:00}
Boyarchenko, S.~I. and Levendorski\u{i}, S.~Z. (2000).
\newblock Option pricing for truncated {L}\'evy processes.
\newblock {\em International Journal of Theoretical and Applied Finance},
  3:549--552.

\bibitem[Carr et~al., 2002]{car+al:02}
Carr, P., Geman, H., Madan, D.~B., and Yor, M. (2002).
\newblock The fine structure of asset returns: an empirical investigation.
\newblock {\em Journal of Business}, 75:305--332.

\bibitem[Cline, 1986]{cline1986convolution}
Cline, D.~B. (1986).
\newblock Convolution tails, product tails and domains of attraction.
\newblock {\em Probability Theory and Related Fields}, 72:529--557.

\bibitem[Cont and Tankov, 2003]{con+tan:03}
Cont, R. and Tankov, P. (2003).
\newblock {\em Financial Modelling with Jump Processes}.
\newblock Chapman and Hall/CRC Press.

\bibitem[Embrechts and Goldie, 1980]{embrechts1980closure}
Embrechts, P. and Goldie, C.~M. (1980).
\newblock On closure and factorization properties of subexponential and related
  distributions.
\newblock {\em Journal of the Australian Mathematical Society}, 29:243--256.

\bibitem[Embrechts and Goldie, 1982]{embrechts1982convolution}
Embrechts, P. and Goldie, C.~M. (1982).
\newblock On convolution tails.
\newblock {\em Stochastic Processes and Their Applications}, 13:263--278.

\bibitem[Embrechts et~al., 2013]{embrechts2013modelling}
Embrechts, P., Kl{\"u}ppelberg, C., and Mikosch, T. (2013).
\newblock {\em Modelling extremal events: for insurance and finance},
  volume~33.
\newblock Springer Science \& Business Media.

\bibitem[Haubold et~al., 2011]{hau+al:11}
Haubold, H.~J., Mathai, A.~M., and Saxena, R.~K. (2011).
\newblock {M}ittag-{L}effler functions and their applications.
\newblock {\em Journal of Applied Mathematics}.
\newblock {Article ID 298628}.

\bibitem[Kim et~al., 2009]{kim+al:08}
Kim, Y.~S., Rachev, S.~T., Bianchi, M.~L., and Fabozzi, F.~J. (2009).
\newblock A new tempered stable distribution and its application to finance.
\newblock In {\em Risk Assessment}, pages 77--109. Springer.

\bibitem[Koponen, 1995]{kop:95}
Koponen, I. (1995).
\newblock Analytic approach to the problem of convergence of truncated {L}\'evy
  flights towards the {G}aussian stochastic process.
\newblock {\em Physical Review E}, 52:1197--1199.

\bibitem[K{\"u}chler and Tappe, 2008]{kuc+tap:08}
K{\"u}chler, U. and Tappe, S. (2008).
\newblock Bilateral gamma distributions and processes in financial mathematics.
\newblock {\em Stochastic Processes and Their Applications}, 118:261--283.

\bibitem[K\"uchler and Tappe, 2013]{kuc+tap:13}
K\"uchler, U. and Tappe, S. (2013).
\newblock Tempered stable distributions and processes.
\newblock {\em Stochastic processes and their applications}, 123:4256--4293.

\bibitem[Mantegna and Stanley, 1994]{man+sta:95}
Mantegna, R.~N. and Stanley, H.~E. (1994).
\newblock Stochastic process with ultraslow convergence to a {G}aussian: the
  truncated {L}\'evy flight.
\newblock {\em Physical Review Letters}, 73:2946--2949.

\bibitem[Miller and Samko, 2001]{mil+sam:01}
Miller, K.~S. and Samko, S.~G. (2001).
\newblock Completely monotonic functions.
\newblock {\em Integral Transforms and Special Functions}, 12:389--402.

\bibitem[Pakes, 2004]{pakes2004convolution}
Pakes, A.~G. (2004).
\newblock Convolution equivalence and infinite divisibility.
\newblock {\em Journal of Applied Probability}, 41(2):407--424.

\bibitem[Pitman, 1980]{pitman1980subexponential}
Pitman, E. (1980).
\newblock Subexponential distribution functions.
\newblock {\em Journal of the Australian Mathematical Society}, 29:337--347.

\bibitem[Pollard, 1948]{pol:48}
Pollard, H. (1948).
\newblock The completely monotonic character of the {M}ittag-{L}effler function
  $ {E}_a (-x)$.
\newblock {\em Bulletin of the American Mathematical Society}, 54:1115--1116.

\bibitem[Rosi\'nski, 2007]{ros:07}
Rosi\'nski, J. (2007).
\newblock Tempering stable processes.
\newblock {\em Stochastic Processes and their Applications}, 117:677--707.

\bibitem[Sato, 1999]{sat:99}
Sato, K. (1999).
\newblock {\em L\'{e}vy Processes and Infinitely Divisible Distributions}.
\newblock Cambridge University Press.

\bibitem[Steutel and Van~Harn, 2004]{ste+vh:03}
Steutel, F.~W. and Van~Harn, K. (2004).
\newblock {\em Infinite Divisibility of Probability Distributions on the Real
  Line}.
\newblock Dekker, New York.

\bibitem[Torricelli, 2023]{GTGS}
Torricelli, L. (2023).
\newblock Tempered geometric stable distributions and processes.
\newblock {\em arXiv preprint arXiv:2305.15837}.

\bibitem[Torricelli et~al., 2022]{tor+al:21}
Torricelli, L., Barabesi, L., and A., C. (2022).
\newblock Tempered positive {L}innik processes and their representations.
\newblock {\em Electronic Journal of Statistics}, 16:6313--6347.

\bibitem[Watanabe, 2008]{wat:08}
Watanabe, T. (2008).
\newblock Convolution equivalence and distributions of random sums.
\newblock {\em Probability Theory and Related Fields}, 142:367--397.

\bibitem[Watanabe and Yamamuro, 2010]{wat+yam:10}
Watanabe, T. and Yamamuro, K. (2010).
\newblock Ratio of the tail of an infinitely divisible distribution on the line
  to that of its {L}{\'e}vy measure.
\newblock {\em Electronic Journal of Probability}, 15:44--74.

\end{thebibliography}

\end{document}